\title[Automorphisms of projective tensor product]{Automorphisms of Banach space projective tensor product of $C^*$-algebras}
\author[R. Jain]{Ranjana Jain}
\address{Department of
  Mathematics\\ University of Delhi\\ Delhi-110007, INDIA.}
\email{rjain@maths.du.ac.in}
\newtheorem{theorem}{\sc Theorem}[section]
\newtheorem{cor}[theorem]{\sc Corollary}
\newtheorem{lemma}[theorem]{\sc Lemma}
\numberwithin{equation}{section}
\theoremstyle{remark}
\newtheorem{rem}[theorem]{\sc Remark}
\newcommand{\oop}{\widehat\otimes}
\newcommand{\seq}{\subseteq}
\newcommand{\oh}{\otimes^h}
\newcommand{\omin}{\otimes^{\min}}
\newcommand{\obp}{\otimes^\gamma}
\newcommand{\oi}{\otimes^\lambda}
\newcommand{\ra}{\rightarrow}
\newcommand{\ot}{\otimes}
\newcommand{\Z}{\mathcal{Z}}
\newcommand{\C}{\mathbb{C}}
\newcommand{\N}{\mathbb{N}}
\newcommand{\M}{\mathbb{M}}
\begin{document}
\keywords{$C^\ast$-algebras, Banach space projective tensor product,
  automorphism, commutant}

\subjclass[2010]{46L06, 46M05, 46L40}

\begin{abstract}
 For unital $C^*$-algebras $A$ and $B$, we completely characterize the isometric ($*$-) automorphisms of their  Banach space projective tensor product $A\obp B$. This leads to the characterization of inner and outer isometric $*$-automorphisms of $A\obp B$, as well. As an application, we provide a partial affirmative answer to a question posed by Kaijser and Sinclair, viz., we prove that for unital $C^*$-algebras $A$ and $B$, the set of norm-one unitaries of $A\obp B$ coincides with $U(A) \otimes U(B)$, where $U(A)$ is the unitary group of $A$. We also establish the fact that the relative commutant of $A\obp \C 1$ in $A \obp B$ is same as $\Z(A) \obp B$, where $B$ is a subhomogenous unital $C^*$-algebra, and $A$ is any $C^*$-algebra. 
\end{abstract}

\maketitle

\section{Introduction}

For Banach spaces $X$ and $Y$, the projective tensor norm on $X \ot Y$ is defined as
$$ \|u\| = \inf \Big\{ \sum_{i=1}^n \|x_i\|  \|y_i\|: u = \sum_{i=1}^n x_i \ot y_i \Big\}, $$
for all $u\in X \ot Y$. The completion of $X\ot Y$ with respect to this norm is defined as the {\it projective tensor product} of $X $ and $Y$, and is denoted by $X\obp Y$.
It is known that if $X$ and $Y$ are Banach $*$-algebras, then so is $X\obp Y$.

In 1970, C. C. Graham (\cite{graham1}, \cite[Theorem $11.7.1$]{graham}) characterized the automorphisms of $A\obp B$ in terms of the automorphisms and isomorphisms between $A$ and $B$. In particular, he proved that for unital abelian $C^*$-algebras $A$ and $B$ with no non-trivial projections, an algebra automorphism $\theta$ of $A \obp B$ is either of the form $\theta (a \otimes b)= \phi(a)\otimes \psi(b)$, where $\phi:A\ra A$ and $\psi : B\ra B$ are automorphisms, or, of the form $\theta (a \otimes b)= \mu(b) \otimes \rho(a)$, where $\mu : B \ra A$ and $\rho : A \ra B$ are isomorphisms. The isometric $*$-automorphisms of $A \oop B$ and isometric automorphisms of $A \oh B$ have also been characterized in \cite{rjak}, where $A$ and $B$ are unital $C^*$-algebras; $\oop$ and $\oh$ are operator space projective tensor product and Haagerup tensor product respectively. In this article we prove its analogue for Banach space projective tensor product of unital $C^*$-algebras (not necessarily abelian).

Kallman \cite[Corollary 1.14]{kall}, in 1969, established that if $R$ and $S$ are von Neumann algebras and $\phi$ and $\psi$ are $*$-automorphisms of $R$ and $S$ respectively, then $\phi \otimes \psi$ is outer on $R \bar{\otimes} S$ if and only if either $\phi$ or $\psi$ is outer. Further, in 1975, S. Wasserman \cite{wass} proved that for unital $C^*$-algebras $A$ and $B$ with $*$-automorphisms $\alpha$ and $\beta$, $\alpha \omin \beta$ is inner if and only if $\alpha$ and $\beta$ are both inner. He also discussed the same question for some other $C^*$-norms.  Also, Bunce \cite[Theorem 1]{bunce} proved that for unital Banach algebra $A$, the flip map $\tau: A \obp A \to A \obp A$ defined as $\tau(a\ot b) = b \ot a$ is an inner automorphism if and only if $A$ is a matrix algebra. Using few of their techniques and the above characterization of automorphisms, we provide a complete characterization of inner and outer automorphisms of $A\obp B$ for unital $C^*$-algebras. 

In 1984, Kaijser and Sinclair \cite{kai-sinc}, proved that for unital Banach algebras $A$ and $B$ with one of them having approximation property, $U_0(A \obp B) = U_0(A) \ot U_0(B):= \{a\ot b: a\in U_0(A), b\in U_0(B)\}$, where $U_0(A)$ is the subgroup of the unitary group $U(A):= \{u\in A: u^{-1} \in A, \|u\|=\|u^{-1}\| =1 \}$ generated by $\{\exp ih:h\in A, h \,\text{Hermitian}\}$.  They further asked under what conditions on unital Banach algebras $A$ and $B$, $ U(A \obp B) = U(A) \ot U(B) $. We prove an appropriate version of this equality for Banach space projective tensor products of unital $C^*$-algebras. Note that this result is not true for the spatial tensor product of $C^*$-algebras, that is, for unital $C^*$-algebras $A$ and $B$, $U(A\omin B)$ need not be same as $U(A) \ot U(B)$ as was illustrated in \cite[Remark 2.6]{vr-1}. 


Further, in 1973, Haydon and Wasserman \cite{hay-wass} proved that for $C^*$-algebras $A$ and $B$, with $B$ unital,  the relative commutant of $A \omin \C 1$ in $A\omin B$ coincides with $Z(A) \omin B $. This result was later extended by Archbold \cite{arch} to any $C^*$-norm. Also R. R. Smith, in 1991 (see \cite[Corollary 4.7]{smith}), established a strong version of Tomita's Commutant Theorem for Haagerup tensor product. In particular, he proved that for unital subalgebras $A_1$ and $B_1$ of $C^*$-algebras $A$ and $B$ respectively, the relative commutant of $A_1 \oh B_1$ in $A\oh B$ is same as $ A'_1 \oh B'_1$; $A'_1, B'_1$ being the relative commutants of $A_1$, $B_1$ in $A$ and $B$ respectively. We prove an analogue for projective tensor product in a specific case. 

Let us give a brief outline of the present article. In Section 2, we prove that for unital $C^*$-algebras $A$ and $B$, an isometric (resp., isometric $*$-) automorphism $\theta$ of $A\obp B$ is precisely of the form $\phi \obp \psi$, or $(\mu \obp \rho) \circ \tau$, where $\phi:A \to A, \psi:B \to B$, $\mu: B \ra A, \rho : A\ra B$ are isometric (resp., $*$-) isomorphisms, and $\tau: A\obp B\to B\obp A$ is the flip map given by $\tau (a\ot b ) = b\ot a$. Using this characterization we further deduce that, if one of $A$ or $B$ is not a matrix algebra, then the inner $*$-automorphisms of $A\obp B$ are precisely of the form   $\phi \obp \psi$, for some inner $*$-automorphisms $\phi$ and $\psi$ of $A$ and $B$ respectively. This enables us to describe the precise form of inner and outer $*$-automorphisms of $B(H) \obp B(H)$. Using this form of inner $*$-automorphisms of $B(H) \obp B(H)$, we further deduce that  $ U_1(A \obp B) = U(A) \ot U(B)$, for unital $C^*$-algebras $A$ and $B$, where $ U_1(A \obp B)$ represents the set of norm-one unitaries of $A\obp B$. In the last section, we establish that the relative commutant of $A\obp \C 1$ (resp., $A \oop \C 1$) in $A\obp B$ (resp., $A\oop B$) is isometrically isomorphic (resp., $*$-isomorphic) to $\Z(A) \obp B $ (resp., $\Z(A) \oop B $),  $A$ being a $C^*$-algebra and $B$ any subhomogenous unital $C^*$-algebra.

\section{Isometric Automorphisms of $A \obp B$}

 Let $A$ be a unital Banach algebra. For $a\in A$, the {\it numerical range} $V(a)$ of $a$ is defined by $$ V(a):= \{ f(a): f \in A^*, \|f\| =f(1) =1\}. $$
An element $h\in A$ is said to be {\it Hermitian} if $V(h) \seq \mathbb{R}$. We write $H(A)$ for the Banach space (over reals) of Hermitian elements of $A$. In case of $C^*$-algebras, Hermitian elements coincide with the self adjoint elements. Hermitian elements of the projective tensor product of certain Banach algebras were identified by Kaiser and Sinclair as follows:
 
 \begin{theorem}\cite[Theorem 3.1]{kai-sinc}
Let $A$ and $B$ be unital Banach algebras such that the canonical map $i:A\obp B \to A \oi B$ is injective, where $\oi$ is the injective tensor product of Banach spaces.  Then, $$H(A \obp B) = H(A) \ot 1 + 1 \ot H(B):=\{a\ot 1 + 1\ot b;\,\,a\in H(A), b\in H(B)\}.$$
 \end{theorem}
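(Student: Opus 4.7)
The plan is to prove both inclusions in $H(A\obp B) = H(A)\ot 1 + 1\ot H(B)$; only the reverse inclusion will need the hypothesis that $i:A\obp B\to A\oi B$ is injective. For the easy direction, I would first verify that $a\mapsto a\ot 1_B$ is a unital isometric embedding $A\hookrightarrow A\obp B$. The upper bound $\|a\ot 1_B\|_\gamma\le \|a\|$ is immediate from the definition of the projective norm, while the lower bound follows by extending the functional $\lambda 1_B\mapsto \lambda$ on $\C 1_B$ via Hahn--Banach to a norm-one $\chi\in B^*$ and applying $\mathrm{id}_A\ot\chi$. Since unital isometric embeddings preserve numerical ranges (hence Hermitian elements), $H(A)\ot 1$ and symmetrically $1\ot H(B)$ sit inside $H(A\obp B)$, and the containment of the full sum follows because $H(A\obp B)$ is a real linear subspace of $A\obp B$.

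For the reverse inclusion, I would take $T\in H(A\obp B)$ and analyse $T$ through slice maps. Given a normalized functional $\psi\in B^*$ (i.e.\ $\|\psi\|=\psi(1_B)=1$), the slice map $L_\psi:A\obp B\to A$, extending $a\ot b\mapsto\psi(b)a$ by continuity, is contractive. For any normalized $\phi\in A^*$, the product $\phi\ot\psi$ extends to a norm-one functional on $A\obp B$ fixing $1\ot 1$, so
\[
\phi(L_\psi(T)) \;=\; (\phi\ot\psi)(T) \;\in\; V(T) \;\subseteq\; \R.
\]
Thus $L_\psi(T)\in H(A)$ for every such $\psi$, and symmetrically $R_\phi(T)\in H(B)$ for every normalized $\phi\in A^*$.

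The heart of the argument, and the main obstacle, is to upgrade this slice-wise Hermitian information into a genuine tensor decomposition. I would aim to show that $L_\psi(T)$ has the very specific form $\psi(1_B)\,h + \psi(k)\,1_A$ for fixed $h\in H(A)$ and $k\in H(B)$; equivalently, the bounded operator $\tilde T:B^*\to A$, $\psi\mapsto L_\psi(T)$, has rank at most two with image in $\mathrm{span}\{h,1_A\}$. To enforce this rigidity I would fix a reference normalized $\psi_0\in B^*$, set $u:=T-L_{\psi_0}(T)\ot 1_B$, observe that $u$ is still Hermitian with $L_{\psi_0}(u)=0$, and try to combine the Hermitian conditions on each left slice with their counterparts for right slices to force $u\in 1_A\ot H(B)$. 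The injectivity of $i:A\obp B\to A\oi B$ is essential here: it ensures that $T$ is recovered from its family of slices and that rigidity of the slice operator transfers to rigidity of $T$ itself. Once the affine form of $L_\psi(T)$ is in hand, reading off $h$ and $k$ (for example $h:=L_{\psi_0}(T)-\psi_0(k)\,1_A$ with $k$ obtained from a right slice $R_{\phi_0}(T)$) and checking $L_\psi(T)=L_\psi(h\ot 1+1\ot k)$ for every normalized $\psi$ gives the required decomposition, with injectivity of $i$ upgrading the equality of all slices to equality in $A\obp B$.
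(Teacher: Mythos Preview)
The paper does not supply a proof of this statement; it is quoted as \cite[Theorem~3.1]{kai-sinc} and used as a black box, so there is no in-paper argument to compare your attempt against.

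Assessing your outline on its own merits, there is a genuine gap at the decisive step. The easy inclusion is fine, and the observation that every left slice $L_\psi(T)$ and every right slice $R_\phi(T)$ of a Hermitian $T$ (for normalized $\psi,\phi$) is again Hermitian is correct and standard. The problem is the passage from ``all slices are Hermitian'' to the rank-two form $L_\psi(T)=\psi(1_B)\,h+\psi(k)\,1_A$. You acknowledge this as the heart of the argument but only state an intention (``try to combine the Hermitian conditions\ldots to force $u\in 1_A\ot H(B)$'') without providing a mechanism. After your reduction, $u=T-L_{\psi_0}(T)\ot 1_B$ is Hermitian with $L_{\psi_0}(u)=0$, and all of its left and right slices are Hermitian; but these conditions together are \emph{equivalent} to ``$T$ is Hermitian'' plus a harmless normalisation, so no progress has been made. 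Concretely, knowing that $(\phi\ot\psi)(u)\in\R$ for all normalized $\phi,\psi$ gives no control over $(\phi\ot\psi)(u)$ when $\phi(1_A)=0$ or $\psi(1_B)=0$, and it is exactly such functionals that detect whether $u$ lies in $1_A\ot B$.

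The Kaijser--Sinclair proof brings in a further ingredient that your outline lacks: rather than working only with states, it exploits the exponential characterisation of Hermitians ($h\in H(C)$ iff $\|\exp(ith)\|=1$ for all real $t$) and analyses $\exp(itT)$ in the projective norm to extract the splitting. Some such additional idea is needed; the slice-wise Hermitian information alone does not force the decomposition.
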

 
 \begin{rem}\label{herm}
  If $A$ and $B$ are $C^*$-algebras, then Haagerup \cite{haag} proved that the canonical map $i:A\obp B \to A \oi B$ is injective. Thus, for $C^*$-algebras $A$ and $B$, the Hermitian elements of $A\obp B$ are determined by the above theorem.
 \end{rem}

To prove the main theorem we need the following result whose proof is quite elementary.  For a Banach $*$-algebra $A$, let $Aut(A)$ denote the set of $*$-automorphisms of $A$. Note that a $*$-isomorphism between $C^*$-algebras is an isometry. 
 
 \begin{lemma}\label{iso-isom}
  Let $A_i$ and $B_i$ be Banach algebras and $\phi_i:A_i\ra B_i$,  $i=1,2$, be isometric isomorphisms. Then, the canonical mapping $\phi_1 \ot \phi_2: A_1\ot A_2 \ra B_1\ot B_2$ extends uniquely to an isometric isomorphism $\phi_1 \obp \phi_2: A_1\obp A_2 \ra B_1\obp B_2$ such that $(\phi_1 \obp \phi_2)(\sum_{i=1}^n (x_i \ot y_i)) = \sum_{i=1}^n\phi_1(x_i) \ot \phi_2(y_i)$. 
 \end{lemma}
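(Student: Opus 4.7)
The plan is to construct the desired map first on the algebraic tensor product $A_1\ot A_2$, show it is isometric with respect to the projective norm, then extend by continuity to $A_1\obp A_2$; bijectivity will be handled by running the same construction for the inverses $\phi_1^{-1}$ and $\phi_2^{-1}$. The canonical linear map $\Phi:=\phi_1\ot\phi_2 : A_1\ot A_2 \to B_1\ot B_2$ is obtained from the universal property of the algebraic tensor product applied to the bilinear map $(x,y)\mapsto \phi_1(x)\ot\phi_2(y)$; it is plainly multiplicative on elementary tensors and hence is an algebra homomorphism.

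The heart of the argument is the isometry statement. Given $u\in A_1\ot A_2$ and any representation $u=\sum_{i=1}^n x_i\ot y_i$, applying $\Phi$ termwise gives a representation $\Phi(u)=\sum_{i=1}^n \phi_1(x_i)\ot \phi_2(y_i)$ whose weighted sum $\sum \|\phi_1(x_i)\|\,\|\phi_2(y_i)\|$ equals $\sum \|x_i\|\,\|y_i\|$, because $\phi_1$ and $\phi_2$ are isometries. Taking the infimum over all representations of $u$ yields $\|\Phi(u)\|_\gamma \le \|u\|_\gamma$. Since $\phi_1^{-1}$ and $\phi_2^{-1}$ are likewise isometric isomorphisms, the same argument applied to $\phi_1^{-1}\ot \phi_2^{-1}$ on $\Phi(u)$ gives the reverse inequality $\|u\|_\gamma \le \|\Phi(u)\|_\gamma$.

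Having established that $\Phi$ is isometric on the dense subspace $A_1\ot A_2 \seq A_1\obp A_2$, the standard extension-by-continuity principle produces a unique isometric linear map $\phi_1\obp\phi_2 : A_1\obp A_2 \ra B_1\obp B_2$ extending $\Phi$, which remains multiplicative by continuity of the Banach-algebra product. Carrying out the same procedure for $\phi_1^{-1}\ot\phi_2^{-1}$ produces an isometric map $\Psi: B_1\obp B_2 \ra A_1\obp A_2$; the compositions $\Psi\circ(\phi_1\obp\phi_2)$ and $(\phi_1\obp\phi_2)\circ\Psi$ agree with the identity on the respective algebraic tensor products, so by density they equal the identity throughout. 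There is no genuine obstacle here — the whole lemma rests on the single observation that isometric isomorphisms on the factors preserve the weighted sums $\sum\|x_i\|\,\|y_i\|$ that enter the defining infimum of the projective norm.
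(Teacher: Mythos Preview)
Your argument is correct and is precisely the standard elementary verification the paper has in mind; in fact the paper does not spell out a proof at all, merely remarking that the result ``is quite elementary,'' so your write-up fills in exactly what was left to the reader.
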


 \begin{theorem}\label{iso-auto}
  Let $A$ and $B$ be unital $C^*$-algebras. Then a mapping $\theta: A\obp B \to A \obp B$ is an isometric (resp., isometric $*$-) automorphism if and only if either $\theta = \phi \obp \psi$, or $\theta = (\mu \obp \rho) \circ \tau$, where $\phi:A \to A, \psi:B \to B$, $\mu: B \ra A, \rho : A\ra B$ are some isometric (resp., $*$-) isomorphisms, and $\tau: A\obp B\to B\obp A$ is the flip map given by $\tau (x\ot y ) = y\ot x$. 
 \end{theorem}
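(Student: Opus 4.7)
My plan is as follows. The ``if'' direction is immediate from Lemma~\ref{iso-isom}: $\phi\obp\psi$ is automatically an isometric (resp.\ $*$-) automorphism whenever $\phi,\psi$ are, and the flip $\tau:A\obp B\to B\obp A$ is evidently an isometric $*$-isomorphism, so $(\mu\obp\rho)\circ\tau$ inherits the same properties.

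For the converse, let $\theta:A\obp B\to A\obp B$ be an isometric (resp.\ isometric $*$-) automorphism. My starting observation would be that $\theta(1\otimes 1)=1\otimes 1$, since $\theta$ is a unital algebra map, and then that $\theta$ preserves the numerical range via $f\mapsto f\circ\theta^{-1}$, so it carries Hermitian elements to Hermitian elements. Combining this with the Kaijser--Sinclair description above (together with Remark~\ref{herm}) gives $\theta(V_A+V_B)=V_A+V_B$, where $V_A:=H(A)\otimes 1$ and $V_B:=1\otimes H(B)$.

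The heart of the argument, and the step I expect to be the main obstacle, is the following dichotomy: for every $h\in H(A)$, either $\theta(h\otimes 1)\in V_A$ or $\theta(h\otimes 1)\in V_B$. To establish it, I would decompose $\theta(h\otimes 1)=h_1\otimes 1+1\otimes h_2$ and $\theta(h^2\otimes 1)=k_1\otimes 1+1\otimes k_2$, then expand $\theta(h^2\otimes 1)=\theta(h\otimes 1)^2$ to extract
\[ 2\,h_1\otimes h_2 = (k_1-h_1^2)\otimes 1 + 1\otimes (k_2-h_2^2) \in V_A+V_B. \]
Applying the slice maps $\mathrm{id}\otimes\omega$ for two states $\omega_1,\omega_2$ of $B$ with $\omega_1(h_2)\neq\omega_2(h_2)$ (such states exist whenever $h_2\notin\mathbb{C}\cdot 1$, since a non-scalar Hermitian element has at least two spectral values) then forces $h_1\in\mathbb{C}\cdot 1$; the symmetric argument shows that if $h_1$ is not scalar then $h_2$ must be. In either case $\theta(h\otimes 1)$ lies in $V_A$ or in $V_B$. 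Because $\{h\in H(A):\theta(h\otimes 1)\in V_A\}$ and $\{h\in H(A):\theta(h\otimes 1)\in V_B\}$ are real subspaces of $H(A)$ whose union is $H(A)$, one of them must equal $H(A)$. Repeating the same reasoning on $1\otimes H(B)$ and on $\theta^{-1}$ leaves exactly two cases: (i) $\theta(V_A)=V_A$ and $\theta(V_B)=V_B$, or (ii) $\theta(V_A)=V_B$ and $\theta(V_B)=V_A$.

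The conclusion is then routine. Using $A=H(A)+iH(A)$ (and likewise for $B$), case (i) yields $\theta(A\otimes 1)=A\otimes 1$ and $\theta(1\otimes B)=1\otimes B$; I would define $\phi:A\to A$ and $\psi:B\to B$ by $\phi(a)\otimes 1=\theta(a\otimes 1)$ and $1\otimes\psi(b)=\theta(1\otimes b)$, both of which are readily verified to be isometric (resp.\ $*$-) automorphisms, and multiplicativity of $\theta$ on elementary tensors together with Lemma~\ref{iso-isom} then forces $\theta=\phi\obp\psi$. Case (ii) is handled symmetrically and produces isometric (resp.\ $*$-) isomorphisms $\mu:B\to A$ and $\rho:A\to B$ with $\theta=(\mu\obp\rho)\circ\tau$.
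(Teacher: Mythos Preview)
Your proposal is correct and follows essentially the same route as the paper's proof: preservation of Hermitians via the numerical range, the Kaijser--Sinclair identification of $H(A\obp B)$, the squaring trick to force $h_1\otimes h_2\in H(A)\otimes 1+1\otimes H(B)$, the dichotomy on $H(A)\otimes 1$, and then the definition of $\phi,\psi$ (or $\mu,\rho$). The only cosmetic differences are that the paper deduces ``$h_1$ or $h_2$ is scalar'' via Hahn--Banach functionals annihilating the identity rather than your two-state slice argument, and it rules out the mixed cases ($\theta(V_A),\theta(V_B)$ both landing in the same factor) by observing that $\theta(A\obp B)$ would then sit inside $A\obp 1$ or $1\obp B$, whereas you use bijectivity via $\theta^{-1}$; both arguments are equally short.
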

 
 \begin{proof}
  Let $\theta$ be an isometric automorphism of $A \obp  B$. We first claim that $\theta$ maps $H(A)\otimes 1$ into $A \otimes 1$ or $1\otimes B$. Since $\theta $ preserves identity and is an isometry, $V(\theta (x)) \subseteq V(x)$ for all $x\in A\obp B$, which further implies that $\theta(H(A\obp B)) \seq H(A\obp B)$. 
Thus, Remark \ref{herm} gurantees that $\theta$ leaves $H(A)\otimes1 + 1\otimes H(B)$ invariant. For $x \in H(A)$,
$$ \theta(x\otimes 1) = u\otimes 1 + 1\otimes v, $$
for some $u\in H(A)$ and $v\in H(B)$. Using the relation
$$\theta (x^2\otimes 1) = u^2\otimes1 + 2u\otimes v +1\otimes v^2,$$
we get $u\otimes v \in H(A\obp B)$. If $u \notin \C1_A$ and $v \notin \C1_B$, then by Hahn-Banach Theorem, we can choose $f\in A^*$ and $g\in B^*$ such that
$f(1_A)= 0 =g(1_B)$, and $f(u) \neq 0,\, g(v) \neq 0$. So $(f \otimes g)(u
\otimes v) \neq 0$ and $(f \otimes g)(A \otimes1 + 1 \otimes B)= (0)$,
which contradicts the fact that $u \otimes v \in A \otimes 1 +
1\otimes B$. Therefore, either $u = \alpha 1_A$ or $v = \beta 1_B$, for
some $\alpha,\, \beta \in \mathbb{C}$, giving that  $\theta (x \otimes 1)$ is
either in $A \otimes 1$ or in $1 \otimes B$, and this is true for all $x \in H(A)$. Let, if possible, there exist $x,y \in H(A)$ such that
$$
\theta (x\otimes 1)=a\otimes 1, \ \theta(y\otimes 1)=1\otimes b,
$$
and neither $a$ nor $b$ is a multiple of $1$. Then, for $x+y\in H(A)$,
$$ \theta ((x+y)\otimes 1)= a\otimes 1+1\otimes b,$$
which is neither in $A\otimes 1$ nor in $1\otimes B$, a
contradiction. Thus, $\theta (H(A)\otimes 1) \seq A \otimes 1$ or $\theta (H(A)\otimes 1) \seq 1\otimes B$.
Since $A$ is the complex linear span of $H(A)$, it follows that $\theta(A\otimes 1) \seq A\otimes 1$ or $\theta(A\otimes 1) \seq1\otimes B$. Repeating the process for the isometric automorphism $\theta^{-1}$ on $A \obp B$, , it is easy to
see that  $\theta(A\otimes 1) = A\otimes 1$ or $ 1\otimes B$. In the former case, define a map $\phi:A\ra A$ as $\phi(a) = a'$ where $\theta(a\ot 1) = a'\ot 1$. So that $\theta(x\otimes1)=\phi(x)\otimes 1$, for all $x\in A$. Also, in the latter case, there exists a map $\rho:A\ra B$ such that $\theta(x\otimes1)= 1\otimes \rho(x)$ for all $x\in A$.
Similarly, $\theta$ maps $1\otimes B$ onto $A\otimes 1$ or onto $1\otimes B$.  Thus, either $\theta(1\otimes y)= 1\otimes \psi(y)$ for some $\psi:B\ra B$, or $\theta(1\otimes y)=\mu(y)\otimes 1$ for some $\mu : B \ra A$. It is easy to check that $\phi,\, \rho,\, \psi$ and $\mu$ are all isometric isomorphisms ($*$-maps if $\theta $ is $*$-map). 
Now, either $\theta(x\otimes y)= \phi(x)\otimes \psi(y)$ or $\theta(x\otimes y)= \mu(y)\otimes \rho(x)= ((\mu \ot \rho)\circ \tau) (x\ot y)$, for all $x \in A,\, y \in B$, as the other two cases will lead us to the fact that $\theta(A \obp B)$ is either contained in $A\obp 1$ or in $1 \obp B$.

Converse follows from Lemma \ref{iso-isom}, and the fact that $\tau$ is an isometric $*$- isomorphism. 
 \end{proof}

 This result now leads us to characterize the isometric inner $*$-automorphisms of $A\obp B$.
 \begin{theorem}\label{inner}
  Let $A$ and $B$ be unital $C^*$-algebras such that at least one of them is different from matrix algebra. Then a mapping $\theta: A\obp B \to A \obp B$ is an isometric inner $*$-automorphism if and only if $\theta = \phi \obp \psi$, where $\phi \in Aut(A)$ and $\psi \in Aut(B)$ are inner. 
 \end{theorem}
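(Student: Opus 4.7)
The backward implication is immediate from Lemma~\ref{iso-isom}: if $\phi = \mathrm{Ad}_v$ and $\psi = \mathrm{Ad}_w$ for norm-one unitaries $v \in U(A)$, $w \in U(B)$, then $v \ot w$ is a norm-one unitary in $A \obp B$ implementing $\phi \obp \psi$. For the converse, let $\theta = \mathrm{Ad}_u$ be an isometric inner $*$-automorphism. By Theorem~\ref{iso-auto}, $\theta$ takes one of two forms: (i) $\theta = \phi \obp \psi$, or (ii) $\theta = (\mu \obp \rho) \circ \tau$. The plan is to prove both factors are inner in case (i) and to rule out case (ii) under the hypothesis.

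For case (i), the key step is to pass to the minimal tensor product. Since the minimal $C^*$-cross-norm is dominated by the projective norm, the identity on $A \ot B$ extends to a contractive $*$-homomorphism $q \colon A \obp B \to A \omin B$. Because $q$ preserves adjoints and the unit, $q(u)$ is a unitary in the $C^*$-algebra $A \omin B$; moreover $\mathrm{Ad}_{q(u)}$ and $\phi \omin \psi$ agree on the dense subalgebra $A \ot B$, hence on all of $A \omin B$ by continuity. Thus $\phi \omin \psi$ is an inner $*$-automorphism of $A \omin B$, and Wasserman's theorem (cited in the introduction) gives that both $\phi$ and $\psi$ are inner.

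For case (ii), the strategy is to derive a contradiction with Bunce's flip theorem. A direct computation yields $\theta^2 = (\mu\rho) \obp (\rho\mu)$, which is inner as the square of an inner automorphism; applying case (i) to $\theta^2$, the automorphism $\alpha := \mu\rho$ of $A$ is inner, say $\alpha = \mathrm{Ad}_v$ with $v \in U(A)$. Now transfer $\theta$ along the isometric $*$-isomorphism $\Phi := \mathrm{id}_A \obp \rho^{-1} \colon A \obp B \to A \obp A$ of Lemma~\ref{iso-isom}; one computes $\Phi \theta \Phi^{-1} = (\alpha \obp \mathrm{id}_A) \circ \tau$, which remains inner on $A \obp A$. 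Since $\alpha \obp \mathrm{id}_A = \mathrm{Ad}_{v \ot 1_A}$ is itself inner, the flip $\tau$ on $A \obp A$ is then a composition of inner automorphisms, hence inner. Bunce's theorem forces $A$ to be a matrix algebra; the symmetric reduction via $\mu^{-1} \obp \mathrm{id}_B \colon A \obp B \to B \obp B$ makes $B$ a matrix algebra as well, contradicting the hypothesis that at least one of $A$, $B$ is not a matrix algebra. Hence case (ii) cannot occur.

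The main obstacle is case (i): one must recover inner implementations of $\phi$ and $\psi$ individually from a single unitary $u \in A \obp B$. The reduction to the minimal tensor product via $q$ neatly sidesteps this by invoking Wasserman's older result; the flip case then follows by routine algebraic manipulation together with Bunce's theorem.
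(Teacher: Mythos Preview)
Your argument is correct. Case~(i) is handled exactly as in the paper: push the conjugation through the contractive $*$-homomorphism $A\obp B\to A\omin B$, observe that $\phi\omin\psi$ is inner on the $C^*$-tensor product, and invoke Wasserman's theorem.

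Case~(ii) is where the two proofs diverge. The paper shows directly that if $(\mu\obp\rho)\circ\tau$ is inner then $A$ and $B$ are matrix algebras: first an ideal argument forces $A$ (and $B$) to be simple, and then an approximation of the implementing unitary by elementary tensors, combined with a slice-map estimate and the Riesz lemma, shows $A$ is finite-dimensional. Your route is to square $\theta$, apply case~(i) to see that $\alpha=\mu\rho$ is inner, conjugate by $\mathrm{id}_A\obp\rho^{-1}$ to land in $A\obp A$, and then peel off the inner factor $\alpha\obp\mathrm{id}_A$ to exhibit the flip on $A\obp A$ itself as inner; Bunce's theorem then finishes. The computation $\Phi\theta\Phi^{-1}=(\alpha\obp\mathrm{id}_A)\circ\tau$ is correct, and the symmetric reduction for $B$ goes through the same way. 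Your approach is shorter and leverages a result already cited in the introduction; the paper's approach is more self-contained (it does not appeal to Bunce) and in effect re-derives the relevant finite-dimensionality step by hand.
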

 
 \begin{proof}
  Let $\theta$ be an isometric inner $*$-automorphism. By Theorem \ref{iso-auto}, either $\theta = \phi \obp \psi$, or $\theta = (\mu \obp \rho) \circ \tau$, where $\phi:A \to A, \psi:B \to B$, $\mu: B \ra A, \rho : A\ra B$ are $*$-isomorphisms, and $\tau: A\obp B\to B\obp A$ is the flip map.
  
  Let us assume that $\theta = (\mu \obp \rho) \circ \tau$. In this case, we assert that $A$ and $B$ are both matrix algebras. We first claim that $A$ is a simple algebra. Let $I$ be a proper closed ideal of $A$. By \cite[Lemma 3.12]{GJ2}, there is a quotient map $\pi \obp Id: A\obp B \to A/I \obp B$ with $\ker (\pi \obp Id) = I \obp B$, where $\pi:A \to A/I$ is the canonical quotient map. Since $\theta $ is an inner automorphism, $\theta( I\obp B) \seq I \obp B$. Now for any $a\in I$, $\theta(a\ot e_B) \in \ker (\pi \obp Id)$, $e_B$ being the identity of $B$. This gives $(\pi \obp Id) ( \rho(e_B) \ot \mu(a))= 0$, that is, $(e_A+ I)\ot \mu(a) = 0$. This further shows that $a=0$, using the fact that $I$ is a proper and $\mu$ is one-one. Thus $A$, and similarly $B$ is simple. It is now sufficient to show that $A$ is finite dimensional. Since $\theta$ is an inner $*$-automorphism, there exists a unitary $ u \in A\obp B$ such that $\theta (x) = uxu^*$, for all $x\in A\obp B$. So, for any $a\in A, b\in B$, we have $$ \theta (a\ot b) = \rho(b) \ot \mu(a) = u(a\ot b) u^*. $$  
  For above $u\in A\obp B$, choose $z, w\in A\ot B$ satisfying 
  $$ \|u-z\| < \frac{1}{4\|u\|}, \quad \|z\| <\|u\| +1 \quad \text{and}\quad \|u^* - w\| < \frac{1}{4(\|u\|+1)}.$$
  Then, for any $a\in A$ and $b\in B$, we obtain
  \begin{eqnarray*}
   \|\rho(b) \ot \mu(a) - z(a\ot b) w\|  & \leq &  \|\rho(b) \ot \mu(a) - u(a\ot b)u^*\| +  \| u(a\ot b)u^* - z(a\ot b)u^*\| \\
       & & +\| z(a\ot b)u^* - z(a\ot b)w\| \\
   & \leq & \|u-z\|\|a\ot b\|\|u^*\| +  \|u^*-w\|\|a\ot b\|\|z\| \\
   & \leq & \frac{1}{2} \|a\| \|b\|.   
  \end{eqnarray*}
  For any $b\in B$, we have
$$ \|\rho(b) \ot \mu(e_A) - z(e_A\ot b) w\|   \leq  \frac{1}{2} \|b\|. $$
If $z =\sum_{i=1}^r x_i \ot y_i$ and  $z =\sum_{j=1}^s u_j \ot v_j$, then 
$$ \|\rho(b) \ot e_B - \sum_{i,j} x_i u_j \ot y_ibv_j \|   \leq  \frac{1}{2} \|b\|. $$
 
Now, choose $f\in B^*$ with $f(e_B) = 1=\|f\|$, and consider the (bounded) left slice map $L_f: A\obp B \to A$ defined as $L_f(\sum_{i=1}^na_i \ot b_i) = \sum_{i=1}^nf(b_i)a_i$. One can easily see that $L_f$ is a contraction, so the above inequality yields
$$ \| \rho(b) - \sum_{i,j} f(y_ibv_j)x_iu_j\| \leq \frac{\|b\|}{2},$$
and this relation is true for all $b\in B$. Since $\rho:B \to A$ is an isometric isomorphism, for any $a\in A$, we obtain
$$ \| a - \sum_{i,j} f(y_i \rho^{-1}(a)v_j)x_iu_j\| \leq \frac{\|a\|}{2}.$$
If we denote $\hat{A}$ by the closed linear span of $\{x_iu_j: 1\leq i \leq r, 1 \leq j \leq s\}$, then by Riesz Lemma, $A = \hat{A}$. Hence $A$ and similarly $B$ is finite dimensional, and both are matrix algebras. 

Thus $\theta = \phi \obp \psi$, and it remains to prove that $\phi$ and $\psi$ are inner. Consider the canonical identity map $i: A \obp B \to A\omin B$, which is one-one, by \cite{haag}. For the quotient map $\phi \omin \psi: A\omin B \to A\omin B$, it is  easy to verify that $ i\circ (\phi \obp \psi) = (\phi \omin \psi) \circ i$. So, for any $x\in A\obp B$, we have $$ (\phi \omin \psi)(i(x)) = i(u) i(x) (i(u))^*.$$
Since $i(A\obp B) $ is dense in $A\omin B$, $\phi \omin \psi$ is inner. By \cite[Theorem 1]{wass}, $\phi$ and $\psi$ are both inner. 

Converse is direct.
\end{proof}

Following the last part of the above proof, we  obtain a characterization for the outer $*$-automorphisms of $A\obp B$.

\begin{cor}
Let $\phi$ and $\psi$ be $*$-automorphisms of unital $C^*$-algebras $A$ and $B$ respectively. Then $\phi \obp \psi$ is an outer $*$-automorphism of $A\obp B$ if and only if either $\phi$ or $\psi$ is an outer $*$-automorphism.
\end{cor}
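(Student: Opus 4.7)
The plan is to establish the contrapositive equivalent statement: $\phi \obp \psi$ is inner on $A \obp B$ if and only if both $\phi$ and $\psi$ are inner.

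The easy direction is immediate. If $\phi = \mathrm{Ad}\,u$ and $\psi = \mathrm{Ad}\,v$ for unitaries $u \in U(A)$ and $v \in U(B)$, then $u \ot v$ is a unitary in $A \obp B$: indeed $\|u \ot v\| \leq 1$ and $\|u^* \ot v^*\| \leq 1$, while $(u \ot v)(u^* \ot v^*) = 1 \ot 1$ forces both norms to be exactly one. The inner automorphism $\mathrm{Ad}(u \ot v)$ and $\phi \obp \psi$ agree on the algebraic tensor product $A \ot B$, hence on its closure $A \obp B$ by continuity.

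For the substantive direction, I would reuse the concluding argument of the proof of Theorem~\ref{inner} almost verbatim. Suppose $\phi \obp \psi = \mathrm{Ad}\,u$ for some unitary $u \in A \obp B$. By Haagerup's theorem \cite{haag}, the canonical identity map $i : A \obp B \to A \omin B$ is an injective contractive $*$-homomorphism, and a check on elementary tensors yields the intertwining relation $i \circ (\phi \obp \psi) = (\phi \omin \psi) \circ i$. Hence, for every $x \in A \obp B$,
$$(\phi \omin \psi)(i(x)) = i(u)\, i(x)\, i(u)^*,$$
and since $i(u)$ is a unitary in $A \omin B$ and $i(A \obp B) \supseteq A \ot B$ is norm-dense in $A \omin B$, this identity extends by continuity to all of $A \omin B$. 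Thus $\phi \omin \psi$ is inner on $A \omin B$, and Wasserman's theorem \cite[Theorem 1]{wass} forces both $\phi$ and $\psi$ to be inner.

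There is essentially no obstacle here, as the heavy lifting was already done in Theorem~\ref{inner}. Note that, unlike that theorem, no hypothesis ruling out matrix algebras is required: because the automorphism is given from the outset in product form $\phi \obp \psi$, the flip-type alternative from Theorem~\ref{iso-auto} cannot arise, and one need only pull the inner-ness across the map $i$ into the minimal tensor product to apply Wasserman's result.
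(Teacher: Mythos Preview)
Your proof is correct and mirrors the paper's own treatment: the paper simply remarks that the corollary follows ``following the last part of the above proof'' of Theorem~\ref{inner}, which is exactly the passage through the injective map $i:A\obp B\to A\omin B$ and Wasserman's theorem that you spelled out, together with the ``direct'' converse.
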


\begin{rem}
 It can be noticed from the above proof that an automorphism $(\mu \obp \rho) \circ \tau$, where $\mu: B \ra A$ and $ \rho : A\ra B$ are isomorphisms (not necessarily $*$-preserving), can not be inner.
\end{rem}

\begin{cor}
 Let $A$ and $B$ be unital $C^*$-algebras with one of them different from a matrix algebra. Then, a mapping $\theta: A\obp B \to A \obp B$ is an isometric outer $*$-automorphism if and only if either $\theta = \phi \obp \psi$, where $\phi \in Aut(A)$, $\psi \in Aut(B)$ with at least one being an outer $*$-automorphism, or, $\theta = (\mu \obp \rho) \circ \tau$, where $\phi:A \to A, \psi:B \to B$, $\mu: B \ra A, \rho : A\ra B$ are $*$-isomorphisms, and $\tau: A\obp B\to B\obp A$ is the flip map.
\end{cor}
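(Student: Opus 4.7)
The plan is to assemble this corollary as a direct bookkeeping consequence of the three preceding results: Theorem \ref{iso-auto}, Theorem \ref{inner}, and the Remark immediately above. The substantive mathematical work has already been done, so what remains is to organize it into the dichotomy stated in the corollary.

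First I would invoke Theorem \ref{iso-auto} to conclude that every isometric $*$-automorphism $\theta$ of $A\obp B$ falls into one of the two structural classes $\theta = \phi \obp \psi$ or $\theta = (\mu \obp \rho) \circ \tau$. It then suffices to decide, within each class, precisely when $\theta$ is outer. In the first class, Theorem \ref{inner} applies under the standing hypothesis that at least one of $A, B$ is not a matrix algebra: it says $\phi \obp \psi$ is inner iff both $\phi$ and $\psi$ are inner. Taking contrapositives, $\theta$ is outer iff at least one of $\phi, \psi$ is outer, which is exactly the first alternative in the statement.

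In the second class, the Remark immediately preceding the corollary records that an automorphism of the form $(\mu \obp \rho) \circ \tau$ is never inner under our hypothesis; indeed, the argument inside the proof of Theorem \ref{inner} showed that if such a $\theta$ were inner, then both $A$ and $B$ would be forced to be matrix algebras, contradicting the hypothesis. Therefore every such $\theta$ is automatically outer, matching the second alternative. For the converse direction, each listed form does produce an isometric $*$-automorphism: $\phi \obp \psi$ by Lemma \ref{iso-isom}, and $(\mu \obp \rho) \circ \tau$ by Lemma \ref{iso-isom} together with the observation that $\tau$ is an isometric $*$-isomorphism. Outerness in the first form is immediate from Theorem \ref{inner} once one of $\phi, \psi$ is assumed outer, and in the second form it is immediate from the Remark.

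The main obstacle, such as it is, is entirely expositional rather than mathematical: one should check that the two structural classes are genuinely disjoint as labelled (so the "either/or" is well-posed), and be careful to read the statement so that the appearance of $\phi, \psi$ inside the flip-case clause is treated as a typographical leftover rather than an additional constraint — only $\mu$ and $\rho$ need to be specified in that branch. Once these cosmetic points are handled, the proof reduces to a two-line case analysis.
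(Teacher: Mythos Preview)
Your proposal is correct and is exactly the intended argument: the paper states this as an unproved corollary, so the proof is precisely the bookkeeping you describe—apply Theorem \ref{iso-auto} to split into the two structural classes, then use Theorem \ref{inner} (equivalently its contrapositive, the preceding corollary) and the Remark to determine outerness in each class. Your observation about the stray $\phi,\psi$ in the flip clause being a typographical leftover is also correct.
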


For a Hilbert space $H$, every $*$-automorphism of $B(H)$ is an inner automorphism (see, \cite{take2}). This yields the following characterization:

\begin{cor}\label{inn-B(H)}
The isometric inner $*$-automorphisms of $B(H) \obp B(H)$ are precisely of the form $\phi \obp \psi$, where $\phi,\psi \in Aut(B(H))$; $H$  being an infinite dimensional separable Hilbert space. 
\end{cor}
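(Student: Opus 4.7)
The plan is to deduce this as a direct consequence of Theorem \ref{inner} together with the classical fact (cited from \cite{take2}) that every $*$-automorphism of $B(H)$ is inner.

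First I would verify the hypothesis of Theorem \ref{inner}. Since $H$ is infinite dimensional, $B(H)$ is not a matrix algebra, so in particular at least one (in fact both) of the two factors in $B(H)\obp B(H)$ is different from a matrix algebra. Hence Theorem \ref{inner} applies with $A=B=B(H)$, and it tells us that every isometric inner $*$-automorphism $\theta$ of $B(H)\obp B(H)$ must be of the form $\theta=\phi\obp\psi$ for some inner $*$-automorphisms $\phi,\psi$ of $B(H)$. In particular $\phi,\psi\in Aut(B(H))$, which gives the claimed form.

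For the converse direction, suppose $\phi,\psi\in Aut(B(H))$. By the result from \cite{take2}, both $\phi$ and $\psi$ are automatically inner, so there exist unitaries $u,v\in B(H)$ implementing them. By Lemma \ref{iso-isom}, $\phi\obp\psi$ is an isometric $*$-automorphism of $B(H)\obp B(H)$, and it is implemented on elementary tensors by the unitary $u\ot v\in B(H)\obp B(H)$ (which is a norm-one unitary since $\|u\ot v\|=\|u\|\,\|v\|=1$ and $(u\ot v)^{-1}=u^*\ot v^*$). By continuity and density this implementation extends to all of $B(H)\obp B(H)$, so $\phi\obp\psi$ is inner.

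There is essentially no obstacle here: the characterization of inner $*$-automorphisms in Theorem \ref{inner} does the heavy lifting, and the only extra input needed is the well-known fact that $Aut(B(H))$ consists entirely of inner automorphisms, which removes the explicit "inner" qualifier on $\phi$ and $\psi$ appearing in Theorem \ref{inner} and leaves just the clean statement $\phi,\psi\in Aut(B(H))$.
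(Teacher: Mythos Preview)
Your proposal is correct and follows exactly the paper's approach: the corollary is stated immediately after the remark that every $*$-automorphism of $B(H)$ is inner (citing \cite{take2}), and is meant to be read as a direct combination of that fact with Theorem~\ref{inner}. You have simply spelled out the two directions explicitly, which the paper leaves implicit.
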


For a separable infinite dimensional Hilbert space $H$, $B(H)$ has no outer $*$-automorphism. However this is not the case with $B(H) \obp B(H)$, which has plenty of outer $*$-automorphisms.  
\begin{cor}\label{out-B(H)}
  The isometric outer $*$-automorphisms of  $B(H) \obp B(H)$ are precisely of the form $(\phi \obp \psi) \circ \tau$, where $\phi,\psi \in Aut(B(H))$ and $\tau$ is the flip map; $H$ being an infinite dimensional separable Hilbert space.
\end{cor}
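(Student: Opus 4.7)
The plan is to invoke the general characterization of Theorem \ref{iso-auto}, restricted to $A=B=B(H)$, and then eliminate cases using what we already know about $B(H)$.

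First, suppose $\theta$ is an isometric outer $*$-automorphism of $B(H)\obp B(H)$. By Theorem \ref{iso-auto}, $\theta$ has one of the two forms $\phi\obp\psi$ or $(\mu\obp\rho)\circ\tau$, with all four maps being $*$-isomorphisms of $B(H)$ onto itself. Since $H$ is infinite dimensional, $B(H)$ is not a matrix algebra, so Theorem \ref{inner} applies. In the first case, every $*$-automorphism of $B(H)$ is inner (by the cited result of Takesaki), so both $\phi$ and $\psi$ are inner, and therefore $\phi\obp\psi$ is itself an inner $*$-automorphism of $B(H)\obp B(H)$ by Theorem \ref{inner}; this contradicts $\theta$ being outer. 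Hence $\theta=(\mu\obp\rho)\circ\tau$ for some $\mu,\rho\in Aut(B(H))$, which is the asserted form (after renaming $\mu=\phi$, $\rho=\psi$).

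For the converse, given $\phi,\psi\in Aut(B(H))$, the map $(\phi\obp\psi)\circ\tau$ is an isometric $*$-automorphism (being the composition of two such), and by the Remark following Theorem \ref{inner}, no automorphism of the form $(\mu\obp\rho)\circ\tau$ can be inner; hence it is outer.

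There is essentially no new obstacle here: the work has already been done in Theorem \ref{iso-auto}, Theorem \ref{inner}, and the subsequent Remark. The only point requiring a moment's care is ensuring that the hypothesis of Theorem \ref{inner} (that at least one factor is not a matrix algebra) is met, which follows from $\dim H=\infty$, and verifying that Takesaki's theorem indeed rules out the form $\phi\obp\psi$ for outer automorphisms by forcing both factors to be inner.
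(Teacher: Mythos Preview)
Your proof is correct and follows essentially the same route as the paper. The paper states this corollary without an explicit proof, but the intended derivation is precisely the one you give: apply Theorem \ref{iso-auto} (or equivalently Corollary 2.8) with $A=B=B(H)$, use that every $*$-automorphism of $B(H)$ is inner to rule out the $\phi\obp\psi$ form, and invoke the Remark following Theorem \ref{inner} (valid here since $B(H)$ is not a matrix algebra) for the converse.
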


\begin{rem}
 Note that if either $A$ or $B$ has an outer automorphism say $\phi$, then $A\obp B$ will also have an outer automorphism namely, $\phi \obp Id$ or $Id \obp \phi$. However, the converse is not true, as can be seen from  Corollary \ref{out-B(H)}. 
\end{rem}

\subsection{Unitary group of $A \obp B$}
  For a unital Banach algebra $A$, its {\em unitary group} is defined as $U(A)=\{u\in GL(A): \|u\| = \|u^{-1}\|=1 \}$. Note that $U(A)$ is not necessarily a subgroup of $GL(A)$. 
  
  If $A$ is a unital $C^*$-algebra, then it is easily seen that its unitary group $U(A)$ coincides with its usual set of unitaries  $\{u \in A: uu^*=u^*u=1\}$. The same need not hold in an arbitrary unital Banach $*$-algebra. However, if $A$ and $B$ are unital $C^*$-algebras, then for any $x=u\ot v \in U(A) \ot U(B)$, we observe that $x^*x=xx^*=1= \|x\|$. In view of this, for a unital Banach $*$-algebra $A$, it is quite appropriate to consider the set of norm-one unitaries $U_1(A\obp B) = \{u \in A: u^*u = uu^*=1= \|u\|\}$. 

The characterization of inner $*$-automorphisms of $B(H) \obp B(H)$ now allows us to determine the norm-one unitary group of $A\obp B$ in terms of unitaries of $A$ and $B$.
 \begin{theorem}
  For unital $C^*$-algebras $A$ and $B$,  
 $$ U_1(A \obp B) = U(A) \ot U(B).$$
 \end{theorem}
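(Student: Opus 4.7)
The forward inclusion $U(A)\otimes U(B) \subseteq U_1(A\obp B)$ is immediate: for $u \in U(A)$ and $v \in U(B)$, the element $u \otimes v$ is a unitary satisfying $\|u \otimes v\|_\gamma \leq \|u\|\|v\| = 1$ and $\|u \otimes v\|_\gamma \geq \|u \otimes v\|_{\min} = 1$, the latter because $u \otimes v$ is a unitary in the $C^*$-algebra $A \omin B$.

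For the reverse inclusion, my plan is to reduce to the case of $B(H) \obp B(H)$, where Theorem \ref{inner} applies, and pull the resulting tensor decomposition back via slice maps. Given $w \in U_1(A \obp B)$, choose faithful unital $*$-representations $\pi_A : A \to B(H_A)$ and $\pi_B : B \to B(H_B)$ with $H_A, H_B$ infinite dimensional, amplifying if necessary so that neither $B(H_A)$ nor $B(H_B)$ is a matrix algebra. The induced contractive map $\Pi := \pi_A \obp \pi_B : A \obp B \to B(H_A) \obp B(H_B)$ is injective: the composition $A\obp B \to B(H_A)\obp B(H_B) \to B(H_A)\omin B(H_B)$ coincides with the two-step map $A\obp B \to A\omin B \to B(H_A)\omin B(H_B)$, whose first arrow is Haagerup-injective (Remark \ref{herm}, after noting that the projective-to-injective-Banach injectivity of \cite{haag} factors through the minimal $C^*$-norm) and whose second arrow is induced by the faithful $*$-homomorphism $\pi_A\omin\pi_B$. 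Setting $w' := \Pi(w)$, one has $w'$ is a unitary with $\|w'\|_\gamma \leq 1$ and $\|w'\|_\gamma \geq \|w'\|_{\min} = 1$, so $w' \in U_1(B(H_A) \obp B(H_B))$.

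Apply Theorem \ref{inner} to $\mathrm{Ad}_{w'}$: as an isometric inner $*$-automorphism it equals $\phi' \obp \psi'$ with $\phi' = \mathrm{Ad}_{u'} \in \mathrm{Aut}(B(H_A))$ and $\psi' = \mathrm{Ad}_{v'} \in \mathrm{Aut}(B(H_B))$ for unitaries $u' \in U(B(H_A))$, $v' \in U(B(H_B))$. Hence $z' := (u'^* \otimes v'^*)\, w'$ is a norm-one unitary in $Z(B(H_A) \obp B(H_B))$. Its image in $B(H_A) \omin B(H_B) \subseteq B(H_A \otimes H_B)$ commutes with both $B(H_A) \otimes 1$ and $1 \otimes B(H_B)$, whose mutual commutant in $B(H_A \otimes H_B)$ is $\C \cdot 1$; injectivity of the canonical map $B(H_A)\obp B(H_B)\to B(H_A)\omin B(H_B)$ then forces $z' = \lambda \cdot 1$ with $|\lambda| = 1$, so $w' = \lambda(u' \otimes v')$ is an elementary tensor of unitaries.

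Finally, pick $g \in B(H_B)^*$ with $g(v') \neq 0$ (Hahn--Banach) and consider the slice map $R_g : B(H_A) \obp B(H_B) \to B(H_A)$. Since $w' = \Pi(w)$ is a $\gamma$-limit of sums $\sum \pi_A(a_i) \otimes \pi_B(b_i)$, continuity of $R_g$ and closedness of $\pi_A(A)$ inside $B(H_A)$ give $R_g(w') \in \pi_A(A)$; computing, $R_g(w') = g(v') \lambda u'$, so $\lambda u' \in \pi_A(A)$. Symmetrically, using a left slice map $L_f$ with $f \in B(H_A)^*$ and $f(u') \neq 0$, one gets $v' \in \pi_B(B)$. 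Setting $u := \pi_A^{-1}(\lambda u') \in U(A)$ and $v := \pi_B^{-1}(v') \in U(B)$ yields $\Pi(u \otimes v) = \lambda u' \otimes v' = w' = \Pi(w)$, and the injectivity of $\Pi$ gives $w = u \otimes v \in U(A) \otimes U(B)$. The main obstacle is bookkeeping the injectivity throughout the chain $A \obp B \to B(H_A) \obp B(H_B) \to B(H_A) \omin B(H_B)$, which rests on Haagerup's injectivity theorem for the canonical map on projective tensor products of $C^*$-algebras.
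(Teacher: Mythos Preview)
Your proof is correct and follows essentially the same route as the paper: embed $A\obp B$ into $B(H)\obp B(H)$ (the paper uses a single $H$ and cites \cite[Theorem 3.1]{GJ2} for an isometric embedding, whereas you allow $H_A\neq H_B$ and argue injectivity directly via the min norm), apply the characterization of isometric inner $*$-automorphisms to $\mathrm{Ad}_{w'}$, identify the center to get $w'=\lambda(u'\otimes v')$, and use slice maps to pull $u',v'$ back into $A,B$. The only cosmetic differences are your explicit treatment of the easy inclusion and your ad hoc computation of $\Z(B(H_A)\obp B(H_B))$ in place of the paper's citation of \cite[Theorem 5.1]{GJ2}.
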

 \begin{proof}
  Since $A$ and $B$  can be embedded $*$-isometrically and unitally into $B(H)$ for some  Hilbert space $H$, by \cite[Theorem 3.1]{GJ2}, $A\obp B$ embeds isometrically into  $B(H)\obp B(H)$. For $u\in U_1(A\obp B)$, define $\theta (x)=uxu^*$ for all
$x\in B(H)\obp B(H)$, then $\theta $ is an isometric inner $*$-automorphism of $B(H)\obp B(H)$. So by Corollary \ref{inn-B(H)}, $\theta = \phi \obp \psi$, where $\phi$ and $\psi$ are inner $*$-automorphisms of $B(H)$.    So there exist unitaries $a$ and $b$ in $B(H)$ such that $\phi(x)= axa^*$ and $\psi(x) = bxb^*$ for all $x\in B(H)$. This gives
$$ u(v\otimes w)u^* =\theta(v\ot w)= ava^*\otimes bwb^* = (a\otimes b)(v\otimes w)(a\otimes b)^*,$$
for all $v\otimes w\in B(H)\otimes B(H)$. Thus, $(a\otimes b)^*u$ is an element of $\Z(B(H)\obp B(H))$, which by \cite[Theorem 5.1]{GJ2}, coincides with $\Z(B(H)) \obp \Z(B(H))$. So  $(a\otimes b)^*u=\lambda 1\otimes 1$, where $\lambda\in \mathbb{C}$ with $\left|\lambda\right|=1$, so $u=(\lambda a)\otimes b$. 

Now, for any $w \in A^*$ for which $w(a) \neq 0$, consider the right slice map $R_w: A\obp B \to B$ defined as $R_w(x\ot y) = w(x)y$. Then, $R_w(u) \in B$ which shows that $w(\lambda a) b \in B$, and thus $b$ is an element of $B$. Similarly by taking the left slice map one can see that  $a\in A$ and hence we are done.
 \end{proof}

 \section{Relative commutant}
 For a Banach algebra $A$ and any subset $S$ of $A$, the {\it relative commutant} of $S$ in $A$ is defined as
 $$ S' = \{ a \in A: as = sa,\, \forall s \in S\}.$$
Also, recall that for $C^*$-algebras $A$ and $B$, the {\it Haagerup norm} of an element $u \in A\ot B $ is given by
$$\|u\|_h = \inf \big\{\|\Sigma_i\, a_ia^*_i\|^{1/2}\ \|\Sigma_i\,
b^*_ib_i\|^{1/2}:\, u=\Sigma^{n}_{i=1}a_i\otimes b_i \big\}.$$
The {\it Haagerup tensor product} of $A$ and $B$, denoted as $A \oh B$, is defined as the completion of $A \ot B$ with respect to $\oh$-norm. It is known that $A \oh B$ is a Banach algebra in which involution is not an isometry (except in the trivial cases). Also, for $C^*$-algebras $A$ and $B$, the canonical identity map $i: A\obp B \ra A\oh B$ is an injective homomorphism (see, \cite[Proposition 3.2]{GJ2}). 
\begin{theorem}
 For $C^*$-algebras $A$ and $B$, where $B$ is unital and subhomogenous
 $$(A\obp \C 1)' = \Z(A) \obp B. $$
\end{theorem}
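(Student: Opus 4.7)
The inclusion $\Z(A) \obp B \subseteq (A\obp \C 1)'$ is immediate: for $z \in \Z(A)$, $b \in B$, and $a \in A$, one has $(a \ot 1)(z \ot b) = az \ot b = za \ot b = (z \ot b)(a \ot 1)$, and the full inclusion follows by linearity and continuity of the multiplication. The reverse inclusion is the substantive content; my plan is to combine a direct slice-map calculation with Smith's commutant theorem for the Haagerup tensor product, and then exploit subhomogeneity of $B$ to transfer the conclusion back to the projective tensor product.

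Fix $u \in (A\obp \C 1)'$. For any $\psi \in B^*$, apply the continuous left slice map $L_\psi : A \obp B \to A$, defined by $L_\psi(a \ot b) = \psi(b) a$, to both sides of $(a \ot 1) u = u (a \ot 1)$. This yields $a\, L_\psi(u) = L_\psi(u)\, a$ for every $a \in A$, so every left slice of $u$ lies in $\Z(A)$. Next, pass $u$ through the injective canonical homomorphism $i : A \obp B \to A \oh B$ (Haagerup's injectivity, recorded in Remark~\ref{herm}). Because $i$ is a homomorphism, $i(u)$ commutes with $A \ot \C 1$ and hence, by density, with all of $A \oh \C 1$. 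Applying Smith's relative-commutant theorem \cite[Corollary 4.7]{smith} to the unital subalgebras $A \subseteq A$ and $\C 1 \subseteq B$ then gives $i(u) \in A' \oh (\C 1)' = \Z(A) \oh B$.

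The final and most delicate step is to deduce from ``$u \in A \obp B$, every left slice of $u$ lying in $\Z(A)$, and $i(u) \in \Z(A) \oh B$'' that $u$ actually belongs to $\Z(A) \obp B$ inside $A \obp B$. Here the subhomogeneity of $B$ enters decisively. I would use the structure theorem for unital $n$-subhomogeneous $C^*$-algebras to produce a separating family of irreducible representations $\pi_\alpha : B \to M_{k_\alpha}$ with $k_\alpha \leq n$; for each $\pi_\alpha$, the induced map $\mathrm{id}_A \obp \pi_\alpha$ sends $u$ into $A \obp M_{k_\alpha}$, which is linearly (though not isometrically) isomorphic to $M_{k_\alpha}(A)$ since $M_{k_\alpha}$ is finite dimensional. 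A direct commutator computation in $M_{k_\alpha}(A)$ forces the image of $u$ to have all entries in $\Z(A)$, i.e.\ to lie in $\Z(A) \obp M_{k_\alpha}$. Patching across these fibres via an embedding $B \hookrightarrow M_n(C(Y))$ afforded by subhomogeneity, together with an approximation/slice-map argument compatible with the projective norm, should recover $u \in \Z(A) \obp B$. The main obstacle is precisely this patching: the slice-map-type property for $\Z(A) \subseteq A$ with respect to projective tensor products with $B$ fails for general $B$, and the subhomogeneity hypothesis is what supplies the finite-fibre structure making the local central information on $u$ globally coherent in the projective norm.
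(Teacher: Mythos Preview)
Your reduction via the injective homomorphism $i:A\obp B\to A\oh B$ and Smith's commutant theorem, obtaining $i(u)\in\Z(A)\oh B$, matches the paper exactly. The divergence, and the gap, is in the passage back from $\oh$ to $\obp$.

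You propose a fibrewise argument through the irreducible representations $\pi_\alpha:B\to M_{k_\alpha}$, followed by a patching step that you yourself flag as ``the main obstacle'' and leave unresolved. This is a genuine gap. The slice information you assemble---that every $L_\psi(u)\in\Z(A)$ and that each $(\mathrm{id}_A\obp\pi_\alpha)(u)$ has matrix entries in $\Z(A)$---does not by itself place $u$ in the closed subspace $\Z(A)\obp B$ of $A\obp B$, precisely because $\obp$ lacks the relevant slice-map property; and routing through an embedding $B\hookrightarrow M_n(C(Y))$ does not obviously help either, since $\obp$ is not injective as a tensor-product functor. You have correctly located the difficulty but not overcome it.

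The paper sidesteps the obstacle entirely with a single norm-equivalence fact. It invokes the Kumar--Sinclair comparison \cite[Theorem~6.1]{kumar-sin} to get a constant $K$ with $\|x\|_\gamma\le K\|x\|_h$ for all $x\in\Z(A)\ot B$; this is where subhomogeneity is used, and it is the only place. Consequently the canonical injection $i':\Z(A)\obp B\to\Z(A)\oh B$ is surjective, hence a bijection. Writing $\theta:\Z(A)\obp B\hookrightarrow(A\obp\C1)'$ for the isometric inclusion (via \cite[Theorem~3.1]{GJ2}), one has $i\circ\theta=i'$; so given $z\in(A\obp\C1)'$, choose $w\in\Z(A)\obp B$ with $i'(w)=i(z)$, and injectivity of $i$ forces $z=\theta(w)$. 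No fibrewise analysis or patching is required, and your preliminary slice-map computation, while correct, plays no role in the paper's argument.
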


\begin{proof}
 
 Since $ \Z(A) \otimes B \seq (A \ot \C 1)' = (A \obp \C 1)'$, consider the inclusion
function from $ \Z(A) \ot B$ into $ (A \obp \C 1)'$. By \cite[Theorem 3.1]{GJ2}, $\Z(A) \obp B$ can be considered as a
$*$-subalgebra of $A \obp B$ and thus of $(A\obp \C 1)'$, so that for any $u\in \Z(A) \ot B$, $
\|u\|_{\Z(A) \obp B} = \|u\|_{(A\obp \C 1)'} $. Thus, the inclusion
function extends uniquely to an isometric $*$-homomorphism, say,
$\theta$ from $\Z(A) \obp B$ into $(A\obp \C 1)'$. It is sufficient
to establish the surjectivity of $\theta $.

Consider the identity map $i: A\obp B \ra A\oh B$, and let $z \in (A \obp \C 1)'$. It is easily
seen that $i(z)x = x i(z)$ for all $x\in A \ot \C 1$; so that $
i(z) \in (A \oh \C 1)'$, and by \cite[Corollary 4.7]{smith}, we have $ (A\oh \C 1)' =
\Z(A) \oh B$.  Now, let $i^\prime: \Z(A) \obp B \ra \Z(A) \oh
B$ be the canonical injective homomorphism (like the map $i$). Then, the
following diagram
$$
\xymatrix{
\Z(A)\obp B \ar[rr]^\theta \ar[rd]_{i^\prime} && (A\obp \C 1)' \ar[ld]^{i}\\
 & \Z(A)\oh B} 
$$ commutes.

Note that, the
map $i^\prime$ is surjective as well. To see this, consider an element
$z^\prime \in \Z(A) \oh B$ and fix a sequence $\{z_n\} \seq
\Z(A)\ot B$ such that $\|z_n -z^\prime\|_h \ra 0 $. Since $\Z(A)$ is subhomogenous (being commutative), by \cite[Theorem 6.1]{kumar-sin} we have
$$\|x\|_\gamma \leq K \|x\|_h \ \text{for all}\ x\in \Z(A)\ot B,$$
for some $K>0$. Thus, the sequence $\{z_n\}$
is Cauchy with respect to $\|\cdot\|_{\gamma}$ and converges to some
$z^{\prime\prime}$ in $\Z(A) \obp B$. This shows that $\{z_n =
i'(z_n)\}$ converges to $z'$ as well as to $z''$ in $\Z(A) \oh
B$. So, $i^\prime(z^{\prime\prime}) = z^\prime$ and
$i^\prime$ is surjective.

Thus, for above $i(z)$ in  $(A \oh \C 1)'$,
there exists some $w \in \Z(A) \obp B$ such that $i(z) =
i^\prime(w) = i( \theta(w))$. Since $i$ is injective, $z= \theta(w)$,
so that $\theta $ is surjective and we are done. 
\end{proof}

Recall that for operator spaces $V$ and $W$, and $u\in  M_n(V\otimes W), n\in \N$, the {\it operator space projective tensor norm} is defined as
$$\|u\|_{\wedge}= \inf \{ \|\alpha\| \|v\| \|w\| \|\beta\| : u = \alpha (v\otimes w) \beta  \}, $$
where $\alpha \in \M_{n,pq}, \beta \in \M_{pq,n}, v\in M_p(V)$ and $w\in M_q(W)$, $p,q\in \N$ being arbitrary, and $v\otimes w = (v_{ij} \otimes w_{kl})_{(i,k),(j,l)} \in M_{pq}(V\ot W) $. The {\it operator space projective tensor product} $V\oop W$ is the completion of $V\otimes W$ under $\|\cdot\|_{\wedge}$-norm. For $C^*$-algebras $A$ and $B$, $A\oop B$ is a Banach $*$-algebra. Note that for the operator space projective tensor product, due to the lack of its injectivity when restricted to the tensor product of $C^*$-algebras, the inclusion map discussed in the above result does not extend to an isometry. However, following the same steps with some modifications, we can obtain the isomorphism between the two spaces.

\begin{theorem}
 Let $A$ be a $C^*$-algebras and $B$ be any subhomogenous unital $C^*$-algebra. Then $(A\oop \C 1)'$ is $*$-isomorphic to $\Z(A) \oop B.$
\end{theorem}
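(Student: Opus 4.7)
The plan is to follow the same diagrammatic argument as in the preceding theorem, with the understanding that $\theta$ will now be a (not necessarily isometric) $*$-isomorphism. The obstruction compared with the $\obp$-case is that $\oop$ is not known to be injective on $C^*$-subalgebras, so the inclusion $\Z(A)\otimes B \hookrightarrow A\otimes B$ no longer induces an isometric embedding of $\Z(A)\oop B$ into $A\oop B$; but functoriality still gives a bounded $*$-homomorphism, and that is all that the diagram argument requires.

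Concretely, I would first define $\theta:\Z(A)\oop B \to (A\oop \C 1)'$. Since the inclusion $\Z(A)\hookrightarrow A$ is a complete isometry, functoriality of $\oop$ supplies a complete contraction $\Z(A)\oop B \to A\oop B$; as the image of $\Z(A)\otimes B$ already lies in $(A\otimes \C 1)' \seq (A\oop \C 1)'$ and $(A\oop \C 1)'$ is closed in $A\oop B$, this restricts to a continuous $*$-homomorphism $\theta$ extending the inclusion on elementary tensors. Let $i:A\oop B \to A\oh B$ and $i':\Z(A)\oop B \to \Z(A)\oh B$ be the canonical $*$-homomorphisms, whose injectivity is the natural operator-space analogue of \cite[Proposition 3.2]{GJ2}. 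By \cite[Corollary 4.7]{smith}, $i$ maps $(A\oop \C 1)'$ into $(A\oh \C 1)' = \Z(A)\oh B$, and one verifies $i\circ \theta = i'$ on $\Z(A)\otimes B$ and hence everywhere by continuity, producing the commutative triangle
$$
\xymatrix{
\Z(A)\oop B \ar[rr]^\theta \ar[rd]_{i'} && (A\oop \C 1)' \ar[ld]^{i}\\
 & \Z(A)\oh B.}
$$

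For surjectivity of $\theta$ I would, as in the preceding theorem, first establish the surjectivity of $i'$. Given $z'\in \Z(A)\oh B$, pick $z_n\in\Z(A)\otimes B$ with $\|z_n - z'\|_h\to 0$. Since $\Z(A)$ is commutative and $B$ subhomogenous, \cite[Theorem 6.1]{kumar-sin} provides $K>0$ with $\|\cdot\|_\gamma \le K\|\cdot\|_h$ on $\Z(A)\otimes B$; combined with the universal inequality $\|\cdot\|_\wedge \le \|\cdot\|_\gamma$ (valid because $\|\cdot\|_\wedge$ is a cross norm), this yields $\|z_n - z_m\|_\wedge \le K\|z_n - z_m\|_h$, so $\{z_n\}$ is Cauchy in $\|\cdot\|_\wedge$ and converges to some $z''\in \Z(A)\oop B$ with $i'(z'') = z'$. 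Then for any $z\in (A\oop\C 1)'$ we have $i(z)\in\Z(A)\oh B = i'(\Z(A)\oop B)$, so $i(z) = i'(w) = i(\theta(w))$ for some $w$, and injectivity of $i$ forces $\theta(w) = z$. Injectivity of $\theta$ is then automatic: if $\theta(w) = 0$ then $i'(w) = i(\theta(w)) = 0$, whence $w=0$.

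The main new ingredient relative to the preceding isometric theorem is the injectivity of the canonical map $A\oop B \to A\oh B$ for $C^*$-algebras, i.e., the natural operator-space analogue of \cite[Proposition 3.2]{GJ2}; the potential failure of injectivity of $\Z(A)\oop B \to A\oop B$ is not an issue here because this injectivity is ultimately recovered as a by-product of the diagram argument via $i'$. Once the injectivity of $i$ is in hand, the rest is essentially a transcription of the preceding argument with ``isometric $*$-isomorphism'' replaced by ``$*$-isomorphism'' throughout.
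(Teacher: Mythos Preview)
Your argument is correct and follows the paper's strategy almost verbatim: the same commutative triangle through $\Z(A)\oh B$, the same appeal to Smith's commutant theorem, and the same norm-comparison argument for the surjectivity of $i'$. Two small divergences are worth flagging. First, for the norm equivalence on $\Z(A)\otimes B$ you factor through $\|\cdot\|_\gamma$ via \cite[Theorem~6.1]{kumar-sin} and the universal bound $\|\cdot\|_\wedge\le\|\cdot\|_\gamma$, whereas the paper invokes \cite[Theorem~7.4]{kumar-sin} to compare $\|\cdot\|_\wedge$ and $\|\cdot\|_h$ directly; both routes work. Second, for the injectivity of $\theta$ the paper does \emph{not} read it off the diagram but instead cites a separate result (\cite[Theorem~2]{rjak}) to the effect that a $*$-homomorphism out of an $\oop$-tensor product which is faithful on the algebraic tensor product is faithful; your diagram argument via the injectivity of $i'$ is a cleaner alternative that avoids this extra citation. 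Finally, the injectivity of $i$ (and $i'$) that you identify as the ``main new ingredient'' is exactly what the paper supplies by citing \cite[Corollary~1]{rjak}.
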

 
 \begin{proof}
  Clearly the inclusion function from $ \Z(A) \ot B$ into $ (A \oop \C 1)'$ can be extended to a contractive $*$-homomorphism $\theta$ from $\Z(A) \oop B$ into $(A\oop \C 1)'$. The surjectivity of $\theta$ follows exactly on the same lines using the injectivity of $i$ (\cite[Corollary 1]{rjak}) and  the equivalence between $\oh$ and $\oop$ (\cite[Theorem 7.4]{kumar-sin}) at the appropriate places. For the injectivity of $\theta$, note that $\theta$ is faithful on $\Z(A) \ot B$, so it is also faithful on $\Z(A) \oop B$, see \cite[Theorem 2]{rjak}.
 \end{proof}

\end{document}